\newtheorem{lemma}{Lemma}[section]
\newtheorem{theorem}[lemma]{Theorem}
\newtheorem{coro}[lemma]{Corollary}
\newtheorem{definition}[lemma]{Definition}
\newcommand{\Bad}{\mathrm{Bad}}
\newcommand{\Tad}{\mathrm{Bad}_{\Theta}}
\newcommand{\bi}{\textbf{k}}
\def\={\;=\;}
\def\>{\;>\;}
\def\<{\;<\;}
\def\:{\,:\,}
\def\.={\;\dot{=}\;}
\newcommand{\R}{\mathbb{R}}
\newcommand{\Q}{\mathbb{Q}}
\newcommand{\Z}{\mathbb{Z}}
\newcommand{\N}{\mathbb{N}}
\newcommand{\M}{\mathcal{M}}
\newcommand{\C}{\mathcal{C}}
\newcommand{\A}{\mathcal{A}}
\newcommand{\LL}{\mathcal{L}}
\newcommand{\U}{\Lambda}
\newcommand{\Ps}{\mathcal{P}_s}
\begin{document}

\title{A note on badly approximable linear forms on manifolds}
%\title{A note on badly approximable linear forms and isotropically winning}

\author[P. BENGOECHEA]{Paloma BENGOECHEA$^\dag$}
\thanks{$^\dag$ Research supported by EPSRC Programme Grant: EP/J018260/1}
\address{Department of Mathematics, ETH Zurich, R\"{a}mistrasse 101, 8092 Z\"{u}rich, Switzerland}
\email{paloma.bengoechea@math.ethz.ch}

\author[N. MOSHCHEVITIN]{Nikolay MOSHCHEVITIN$^*$}
\thanks{$^*$ Research supported by RFBR Grant No. 15-01-05700a}
\address{Department of Mathematics and Mechanics, Moscow State University, Leninskie Gory 1, GZ MGU, 119991 Moscow, Russia}
\email{moshchevitin@rambler.ru}

\author[N. STEPANOVA]{Natalia STEPANOVA$^*$}
%\thanks{$^*$ Research supported by RFBR grant No. 15-01-05700a}
\address{Department of Mathematics and Mechanics, Moscow State University, Leninskie Gory 1, GZ MGU, 119991 Moscow, Russia}
\email{natalia.stepanova.msu@gmail.com}
%\keywords{Badly approximable numbers, twisted Diophantine approximation, Hausdorff dimension, manifolds}
%\subjclass[2010]{11J13,11J61,11J68}

\begin{abstract}
This paper is motivated by Davenport's problem and the subsequent work regarding badly approximable points in submanifolds of a Euclidian space. We study the problem in the area of twisted Diophantine approximation and present two different approaches. The first approach shows that, under a certain restriction, any countable intersection of the sets of weighted badly approximable points on any non-degenerate $\C^1$ submanifold of $\R^n$ has full dimension. In the second approach we introduce the property of isotropically winning and show that the sets of weighted badly approximable points are isotropically winning under the same restriction as above.

\end{abstract}

\maketitle

\section{Introduction}

In~\cite{KH} Khintchine proved that there exists an absolute constant $\gamma>~0$ such that for any  $\theta\in\R$ there exists $x\in\R$ satisfying
\begin{equation}\label{K1}
\inf\limits_{q \in \mathbb{N}} q \cdot \| q\theta - x\| \ge \gamma.
 \end{equation}
The best known value of $\gamma$ is probably due to H. Godwin~\cite{HG}. More than 50 years later, Tseng \cite{T} showed that,
%if we allow the constant $\gamma$ to depend on $\theta$, then
 for every $\theta \in \mathbb{R}$, the set of all $x$ for which there exists a positive constant $\gamma=\gamma(\theta,x)$ such that~\eqref{K1} is true is $1/8$-winning for the standard Schmidt game (in particular it has maximal Hausdorff dimension). 
Now we usually refer to such sets, denoted by $\Bad_\theta$ as sets of \textit{twisted badly approximable numbers}.

In the other direction, Kim \cite{K} proves that $\Bad_\theta$ has 0 Lebesgue measure if $\theta\in\R\backslash\Q$. 
 
The study of twisted badly approximable numbers has been pursued to higher dimension as the classical approximation by rationnals.
Various multidimensional generalizations of Khinchine's result were due to Khinchine~\cite{KH2, KH3}, Jarn\'{i}k \cite{J,J2}, Kleinbock~\cite{KL}, Bugeaud, Harrap, Kristensen, Velani~\cite{BHKV}, Moschevitin~\cite{NGM}, Einsiedler and Tseng~\cite{ET} and others.
In the classical case, Schmidt introduced the weighted simultaneously badly approximable numbers 
$$
\Bad(i,j)=\left\{(x_1,x_2)\in\R^2:\, \inf_{q\in\N}\max(q^i\|qx_1\|, q^j\|qx_2\|)>0\right\},
$$
where $i,j$ are real positive numbers satisfying $i+j=1$.

It is well-known that $\Bad(i,j)$ has Lebesgue measure 0 and it was shown to have full Hausdorff dimension by Pollington and Velani \cite{PV} only in 2002. A few years later, Badziahin, Pollington and Velani made a breakthrough \cite{BPV} by settling a famous conjecture of Schmidt. They proved that any countable intersection with a certain restriction of sets $\Bad(i,j)$ for different weights $(i,j)$ has full Hausdorff dimension. This result was significantly improved by An in \cite{An}, where he proved that $\Bad(i,j)$ is winning for the standard Schmidt game, so the above mentioned result remains true if we replace `finite' by `countable'. 

Moreover, Badziahin, Pollington and Velani's work settled the foundations for the study of the dimension of $\Bad(i,j)$ on planar curves. 
In 1964, Davenport asked the question: is the intersection of $\Bad(\frac{1}{2},\frac{1}{2})$ with the parabola uncountable? Badziahin and Velani \cite{BV} answered positively this question and proved the more general result: the set $\Bad(i,j)$ on any $C^{(2)}$ non-degenerate planar curve has full dimension. This is false in general if we replace non-degenerate curves by straight lines. %It is only valid for vertical lines $L_\alpha$ if $\alpha$ satisfies a certain Diophantine condition. 
Recently Badziahin and Velani's result has been improved to winning in \cite{ABV} and generalized to higher dimension by Beresnevich \cite{Ber}. 
In higher dimension, we fix an $n$-tuple $\bi=(k_1,\ldots,k_n)$ of real numbers satisfying
\begin{equation}\label{weight}
k_1,\ldots,k_n>0\qquad\mbox{and}\qquad \sum_{i=1}^n k_i=1,
\end{equation}
and define
$$
\Bad(\bi,n,m)=\left\{\Theta\in\mathrm{Mat}_{n\times m}(\R):\, \inf_{q\in\Z^m_{\neq 0}}\max_{1\leq i\leq n}(|q|^{mk_i}\|\Theta_i(q)\|)>0\right\}.
$$
Here, $|\cdot|$ denotes the supremum norm, $\Theta=(\Theta_{ij})$ and $\Theta_i(q)$ is the product of the $i$-th line of $\Theta$ with the vector $q$, i.e.
$$
\Theta_i(q)=\sum^m_{j=1} q_j \Theta_{ij}.
$$ 
Beresnevich proved that any countable intersection with a certain restriction of sets $\Bad(\bi,n,1)$ for different weights $\bi$ (and same dimension of approximation $n\times 1$) with any analytic non-degenerate manifold in $\R^n$ has full dimension. Thus he also settled a generalization to higher dimension of Schmidt's conjecture. %The generalization of An's winning result for $n>2$ remains an open problem.

In twisted Diophantine approximation, much less is known. Given $\Theta\in\mathrm{Mat}_{n\times m}(\R)$, we define
$$
\Tad(\bi,n,m)=\left\{x\in\R^n:\, \inf_{q\in\Z^m_{\neq 0}}\max_{1\leq i\leq n}(|q|^{mk_i}\|\Theta_i(q)-x_i\|)>0\right\}.
$$
The Lebesgue measure of $\Tad(\bi,n,1)$ is zero for almost all $\Theta\in\R^n$ and its Hausdorff dimension is always maximal  (see \cite{BM}). The winning property in $n\times m$ dimension has been proved by Harrap and Moshchevitin \cite{HM} provided that $\Theta\in\Bad(\bi,n,m)$. The general problem remains open even for $n=2, m=1$.

In this note, we keep the condition $\Theta\in\Bad(\bi,n,m)$ and study the Hausdorff dimension and winning property of $\Tad(\bi,n,m)$ on manifolds of $\R^n$.

\section{Statement and discussion of results}

There are different definitions for a set to be winning in a Euclidian space $\mathcal{E}$. In this paper we consider classical Schmidt's definition. So when we say that a set is winning in the Euclidean space $\mathcal{E}$ we mean that it is $\alpha$-winning in $\mathcal{E}$ for some $0<\alpha\leq\frac{1}{2}$ in the sense of Schmidt's game \cite{Sch1}, \cite{Sch2}, \cite{Sch3}.
%Let $B$ be a set in a Euclidean $d-$dimensional space $\mathcal{E}$. There are different definitions for $B$ to be a winning set. In this paper we consider classical Schmidt's definition. So when we say that $B$ is a winning set in the Euclidean space $\mathcal{E}$ we mean that $B$ is an $\alpha$-winning set in $\mathcal{E}$ for some $0<\alpha<1$ in the sense of Schmidt's game \cite{Sch1}, \cite{Sch2}, \cite{Sch3}.
One of the reasons is that we apply a lemma from \cite{NGM} that was formulated for classical Schmidt's games. However, it is clear that Theorems \ref{th curve} and \ref{Affine} will be true for the hyperplane absolute game (for the definition see \cite{FSU}).

A $C^{(1)}$ curve $\C$ in $\R^n$ can be written in the form
\begin{equation}\label{def curve}
\C=\left\{(f_1(x),\ldots,f_n(x)),\ x\in I\right\}
\end{equation}
where $f=(f_1,\ldots,f_n):I\rightarrow\R^n$ is a $C^{(1)}$ map defined on a compact interval $I\subset\R$. We call $\C$ \textit{non-degenerate} (respectively \textit{everywhere non-degenerate}) if for all $i=1,\dots,n$ we have $f'_i(x)\neq 0$ for some $x\in I$ (resp. for all $x\in I$). We call a  $C^{(1)}$ submanifold of $\R^n$ \textit{non-degenerate} if it can be foliated by non-degenerate $C^{(1)}$ curves.

\begin{theorem}\label{th curve}
Let $\bi=(k_1,\ldots,k_n)$  be an $n$-tuple of positive real numbers satisfying $\sum_{i=1}^n k_i=1$. 
Let $\Theta\in\Bad(\bi,n,m)$ and $\C\subset\R^n$ be an everywhere non-degenerate $C^{(1)}$ curve. Then the set
$\Tad(\bi,n,m)\cap \C$ is winning.
\end{theorem}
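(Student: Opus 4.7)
The plan is to parametrize $\C$ by the $C^{(1)}$ map $f:I\to\R^n$ and play a classical Schmidt $(\alpha,\beta)$-game on a compact subinterval $J\subset I$. Since $\C$ is everywhere non-degenerate and $C^{(1)}$, compactness gives uniform bounds $0<\mu\leq|f_i'(t)|\leq M<\infty$ on $J$, so $f|_J$ is bi-Lipschitz. It suffices to exhibit, for some absolute $\alpha\in(0,1/2]$ and any $\beta\in(0,1)$, a winning strategy for Alice producing a limit point $t_\infty\in J$ with $f(t_\infty)\in\Tad(\bi,n,m)$.

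For $q\in\Z^m\setminus\{0\}$ and $c>0$ I introduce the bad set
\[
E_q(c)=\{x\in\R^n:\max_i|q|^{mk_i}\|\Theta_i(q)-x_i\|<c\},
\]
a $\Z^n$-periodic union of axis-aligned boxes with $i$-th side $2c/|q|^{mk_i}$. A point $x$ lies in $\Tad(\bi,n,m)$ iff $x\notin\bigcup_{q\neq 0}E_q(c)$ for some $c>0$. Using the upper bound $M$, the pre-image $f^{-1}(E_q(c))\cap J$ is contained in a union of intervals each of length at most $2c/(\mu|q|^{mk_*})$, where $k_*=\max_i k_i$ (the smallest side of the box, divided by the lower bound on $|f_i'|$).

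The crucial input is a separation lemma exploiting $\Theta\in\Bad(\bi,n,m)$. Let $c_*>0$ be its $\Bad$-constant. For constants $c_0,c_1$ chosen small in terms of $c_*,\mu,M,m$, and any $Q\geq 1$, two distinct $q,q'$ with $|q|,|q'|\in[Q,2Q]$ whose bad sets $f^{-1}(E_q(c_0)),f^{-1}(E_{q'}(c_0))$ both meet a common window $J_0\subset J$ of length $c_1Q^{-mk_*}$ would, by componentwise triangle inequality combined with the Lipschitz bound $|f_i(t)-f_i(t')|\leq M|J_0|$, satisfy
\[
\max_i|q-q'|^{mk_i}\|\Theta_i(q-q')\|\leq 2\cdot 3^m c_0+3^m Mc_1<c_*,
\]
contradicting $\Bad$ since $q-q'\neq 0$. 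Hence in any such window at most one bad $q$ of dyadic scale $[Q,2Q]$ appears.

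Alice's strategy is now scale-by-scale. Assign to each dyadic scale $[2^j,2^{j+1})$ a round $k(j)$ at which $\rho_{k(j)}=\rho_0(\alpha\beta)^{k(j)}$ is scale-matched to $c_1\, 2^{-jmk_*}$; by the separation lemma only boundedly many $q$'s in that scale intrude on $B_{k(j)}$, with total length a small absolute fraction of $\rho_{k(j)}$ once $c_0$ is taken sufficiently small relative to $\alpha$. Alice then places $A_{k(j)}\subset B_{k(j)}$ of radius $\alpha\rho_{k(j)}$ disjoint from the offending intervals; the finitely many small $|q|$ are dealt with by shrinking $J$ initially, and rounds not of the form $k(j)$ are played arbitrarily. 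The main obstacle is the constant juggling: one must arrange that the cumulative bad length at each matched round is \emph{strictly} less than Alice's wiggle room, which forces $\alpha$ to be a small absolute constant, and the assignment $j\mapsto k(j)$ must be coordinated across the geometric rates $\alpha\beta$ and $2^{-mk_*}$. The latter is smoothed over by the lemma from~\cite{NGM} invoked at the start of Section~2, which permits Alice to interlace her moves across nearby rounds while preserving the winning property.
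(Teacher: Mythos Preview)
Your outline shares the paper's core mechanism: exploit the bi-Lipschitz nature of an everywhere non-degenerate $C^{(1)}$ curve, derive a separation lemma from the hypothesis $\Theta\in\Bad(\bi,n,m)$ showing that at most one $q$ per scale can produce a bad interval meeting a suitably short window, and then have Alice avoid that single interval at each round. The separation estimate you wrote down is essentially the paper's Fact~2, and your bound on the length of $f^{-1}(E_q(c))$ via the coordinate with the largest exponent $k_*$ is the paper's Fact~1 (the paper plays on the projection $f_1(I)$ rather than on the parameter interval, but this is immaterial by bi-Lipschitz equivalence).

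The genuine gap is in the scale-matching. You fix dyadic layers $[2^j,2^{j+1})$ independently of $\beta$ and then try to attach each layer to a game round $k(j)$. When $\beta$ is small the game radius $(\alpha\beta)^k$ contracts much faster than $2^{-mk_*}$, so unboundedly many dyadic layers share the same $k(j)$; at that round Alice would have to dodge $\asymp\log(1/\beta)$ separate bad intervals, and the largest of them (coming from the smallest $j$ in the batch) has length comparable to $\rho_{k(j)-1}$, which for small $\beta$ exceeds the current window $\rho_{k(j)}$ itself. No fixed $\alpha$ survives this, even if you let $c_0$ depend on $\beta$. Your appeal to the lemma from \cite{NGM} does not rescue the argument: that lemma asserts the winning property of sets of the form $N(\Lambda)$ for lacunary sequences of integer vectors, and in the paper it is invoked only for Theorem~\ref{Affine}, not for the curve result.

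The paper's remedy is to let the scale partition itself depend on $\beta$: set $R=(4/\beta)^{1/mk_1}$ and take $\mathcal{P}_s=\{(p,q):R^{s-1}<|q|\le R^s\}$, so that $|B_s|=R^{-smk_1}|B_0|$ and exactly one layer corresponds to each round. The same separation computation (now over the wider range $(R^{s-1},R^s]$, which is harmless) yields a unique $(p,q)\in\mathcal{P}_s$ whose bad interval meets $B_s$, and by choosing $\epsilon$ small in terms of $\beta$ that single interval has length $<\tfrac12|B_s|$. Alice then avoids it with $\alpha=1/4$. Replacing your dyadic partition by this $\beta$-adapted one closes the gap; everything else in your sketch goes through.
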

 
In view of the fact that the intersection
of countably many winning sets is winning and that a winning set has full dimension, given any countable collection $(\bi_t)_{t\geq 1}$ of $n$-tuples $\bi_t=(k_{1,t},\ldots,k_{n,t})$ of positive 
real numbers satisfying $\sum_{i=1}^n k_{i,t}=1$, Theorem \ref{th curve} implies that 
\begin{equation}\label{dim curve}
\dim\Big(\bigcap_{t\geq 1}\Tad(\bi_t,n,m)\cap \C\Big)=1
\end{equation} 
for any $\Theta\in\bigcap_{t\geq 1}\Bad(\bi_t,n,m)$ and any everywhere non-degenerate $C^{(1)}$ curve $\C\subset\R^n$. If we require that $\C$ is non-degenerate only for some $x\in I$ instead of everywhere,  we can still choose an interval $I_0\subset I$ small enough so that the shorter curve $\C^\ast=\left\{(f_1(x),\ldots,f_n(x)),\ x\in I_0\right\}$
 clearly is everywhere non-degenerate. Then we have equation \eqref{dim curve} for $\C^\ast$ and of course also for $C$.
The fibering technique (see \cite{Spr} p. 9-10, or also \cite{Ber} section 2.1) establishes the generalization of \eqref{dim curve} to non-degenerate $C^{(1)}$ manifolds.

\begin{coro}\label{th manifolds}
Let $(\bi_t)_{t\geq 1}$ be a countable collection of $n$-tuples $\bi_t=(k_{1,t},\ldots,k_{n,t})$ of positive 
real numbers satisfying $\sum_{i=1}^n k_{i,t}=1$.
Let $\Theta\in\bigcap_{t\geq 1}\Bad(\bi_t,n,m)$ and $\M\subset\R^n$ be a non-degenerate $C^{(1)}$ manifold. Then 
$$
\dim\Big(\bigcap_{t\geq 1}\Tad(\bi_t,n,m)\cap \M\Big)=\dim(\M).
$$ 
\end{coro}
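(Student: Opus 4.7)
The plan is to deduce the corollary from Theorem \ref{th curve} via the fibering technique indicated in the paragraph immediately preceding the statement. First I would localize: fix a point $p\in\M$ and, using the definition of non-degenerate $C^{(1)}$ manifold, pick a $C^{(1)}$ chart $\Phi\colon I\times U\to\M$ with $I\subset\R$ a compact interval and $U\subset\R^{d-1}$ open (where $d=\dim\M$), such that every slice $\C_\alpha := \Phi(\cdot,\alpha)$ is a non-degenerate $C^{(1)}$ curve. By continuity of the derivatives $f_i'$ I then shrink $I$ and $U$ so that each $\C_\alpha$ is in fact \emph{everywhere} non-degenerate, which is exactly the hypothesis needed for Theorem \ref{th curve}. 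This is the same local thickening argument the authors already make right after Theorem \ref{th curve}.

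Next, for each $\alpha\in U$ and each $t\geq 1$, Theorem \ref{th curve} says that $\Tad(\bi_t,n,m)\cap \C_\alpha$ is winning in $\C_\alpha$. Since countable intersections of winning sets are winning and winning sets have maximal Hausdorff dimension, for every $\alpha\in U$ one obtains
\[
\dim\Big(\bigcap_{t\ge 1}\Tad(\bi_t,n,m)\cap \C_\alpha\Big) \= 1.
\]

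Finally I would invoke the fibering technique (\cite{Spr}, pp.~9--10, or \cite{Ber}, Section~2.1): if a $d$-dimensional $C^{(1)}$ manifold is foliated by $C^{(1)}$ curves and each fiber meets a set $S$ in Hausdorff dimension $1$, then $S$ intersected with the union of the fibers has Hausdorff dimension at least $1+(d-1)=d$. Applied to $S=\bigcap_{t\ge 1}\Tad(\bi_t,n,m)$ and the foliation $\{\C_\alpha\}_{\alpha\in U}$, this gives $\dim(S\cap\M)\geq d$, while the opposite inequality is trivial.

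Conceptually the proof is a direct reduction, and the substantive content sits entirely inside Theorem \ref{th curve}. The only bookkeeping is verifying that a non-degenerate manifold does locally admit a foliation by \emph{everywhere} non-degenerate curves regular enough to run the fibering argument; once the chart $\Phi$ is fixed this is straightforward, so I do not expect any real obstacle here.
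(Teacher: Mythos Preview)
Your proposal is correct and follows essentially the same route as the paper: apply Theorem~\ref{th curve} on (locally) everywhere non-degenerate curves, use that countable intersections of winning sets are winning and hence of full dimension to get dimension~$1$ on each fiber, and then invoke the fibering technique of \cite{Spr} or \cite{Ber} to pass from curves to the $d$-dimensional manifold. The only cosmetic difference is the order in which you localize versus intersect over~$t$, which is immaterial.
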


The proofs of Theorem \ref{th curve} and Corollary \ref{th manifolds} are given in Section 3. The arguments used there do not provide winning on everywhere non-degenerate $C^{(1)}$ manifolds unless the manifold is a curve.
 However, a different strategy can be adopted to prove winning on affine subspaces.

\begin{definition}
We call a set $\mathcal{N} \subset \mathbb{R}^n$ \it{isotropically winning} 
if for each $d \le n$ and for each $d$-dimensional affine subspace $\mathcal{A} \subset \mathbb{R}^n$ the intersection $\mathcal{N} \cap \mathcal{A}$ is $1/2$-winning.
\end{definition}
 
Here we should note that the isotropically winning property is really a very strong property. For example, the set $\Bad(\frac{1}{2},\frac{1}{2})$ is $1/2$-winning in Schmidt's sense, however it is not isotropically winning.

\begin{theorem}\label{Affine}
If  $\Theta \in \Bad(\bm{k}, n,m)$, then $\Bad_{\Theta} (\bm{k}, n,m)$ is isotropically winning.
\end{theorem}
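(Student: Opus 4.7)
The plan is to adapt the Schmidt-game winning strategy of Harrap--Moshchevitin~\cite{HM} to the restricted setting of an arbitrary affine subspace $\A \subset \R^n$ of dimension $d$, with the strengthening that we must achieve the parameter $\alpha = 1/2$. The case $d=0$ is vacuous, so we may assume $d \geq 1$. For each $q \in \Z^m \setminus \{0\}$ and a constant $c > 0$ to be chosen, introduce the axis-aligned periodic family
\[
\Delta(q,c) \= \{x \in \R^n : |q|^{mk_i}\|\Theta_i(q)-x_i\| < c \text{ for all } i\},
\]
so that $\R^n \setminus \bigcup_{q \neq 0}\Delta(q,c) \subseteq \Tad(\bi,n,m)$. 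It suffices to construct a strategy for Alice in the $(1/2,\beta)$-game played on $\A$ whose limit point avoids every $\Delta(q,c)$.

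Two ingredients go into constructing the strategy. First, the assumption $\Theta \in \Bad(\bi,n,m)$ gives $c_0 > 0$ with $\max_i |q|^{mk_i}\|\Theta_i(q)\| \geq c_0$ for every $q \neq 0$; applying this to $q_1-q_2$ via the triangle inequality shows that provided $c$ is chosen small enough (depending only on $c_0$, $\bi$, and the dimensions), the boxes $\Delta(q_1,c)$ and $\Delta(q_2,c)$ are disjoint whenever $q_1 \neq q_2$ satisfy $|q_1|,|q_2| \in [R,2R]$. Hence, within any single ball of diameter at most $1$ at each dyadic scale, at most one translate of $\Delta(q,c)$ is active. Second, since $d \geq 1$, at least one basis vector $e_i$ has nonzero projection $\sigma_i > 0$ onto the linear part of $\A$; for such $i$, the slab $\{|x_i - \Theta_i(q)| < c|q|^{-mk_i}\}$ cuts $\A$ in a slab of width $2c|q|^{-mk_i}/\sigma_i$. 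Minimizing over admissible $i$, every relevant translate of $\Delta(q,c)\cap\A$ lies in a slab of $\A$ of width $T(q) = O(|q|^{-m\kappa_\A})$ for a suitable $\kappa_\A > 0$ depending only on $\A$ and $\bi$.

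Alice's strategy in the $(1/2,\beta)$-game on $\A$ then runs as follows. Let $B_j$ be Bob's ball of radius $\rho_j = \rho_1(\beta/2)^{j-1}$. At round $j$, pick a scale $R_j$ so that the (unique, by the separation above) box $\Delta(q,c)\cap\A$ with $|q|\in [R_j,2R_j]$ meeting $B_j$ has slab width $T(q) \ll \rho_j$, and place Alice's ball $A_j$ of radius $\rho_j/2$ on the far side of that slab's midline, inside $B_j$. Iterating over dyadic scales covers every $q \neq 0$, so the nested limit point $x_\infty = \bigcap_j B_j$ avoids every $\Delta(q,c)$ and thus lies in $\Tad(\bi,n,m)\cap\A$.

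The main obstacle is the tightness of the $\alpha = 1/2$ parameter: for Alice's $\rho_j/2$-ball inside Bob's $\rho_j$-ball to miss a slab of width $w$, the center of $B_j$ must already be at distance $> w/2$ from that slab's midline. This cannot be arranged in isolation at round $j$ but only as an inductive invariant maintained by Alice's earlier moves, pushing Bob's subsequent ball centers away from upcoming dangerous midlines. The bounded number of slabs per dyadic scale (from the $\Bad$ separation), together with the geometric decay $T(q)\to 0$, makes this invariant sustainable; verifying its maintenance through the full infinite game is the technical core of the argument.
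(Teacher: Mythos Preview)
Your approach is quite different from the paper's, and the step you flag as ``the main obstacle'' is in fact a genuine gap. You correctly observe that with $\alpha=1/2$ Alice cannot avoid a slab of positive width in a single round: if the midline of the slab passes through Bob's centre $b_j$, then every ball of radius $\rho_j/2$ contained in $B(b_j,\rho_j)$ meets the slab. Your proposed remedy is an ``inductive invariant'' whereby Alice at round $j$ steers Bob's next centre away from the \emph{upcoming} dangerous midline. But which level-$(j{+}1)$ slab is relevant depends on where Bob places $B_{j+1}$ inside $A_j$, and $A_j$ has radius $\rho_j/2\asymp R^{-jmk_{n-t}}$ while the level-$(j{+}1)$ slabs are only $\asymp R^{-(j+1)mk_{n-t}}$ apart; so several parallel midlines sit inside $A_j$, and Bob may centre $B_{j+1}$ on any of them. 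Alice therefore cannot anticipate the midline she must avoid. A workable variant is to spend a \emph{block} of $K\ge 2$ rounds on each level: once $B_{Ks}$ is revealed Alice identifies the single level-$s$ slab meeting it and pushes perpendicularly away from its midline for $K$ consecutive moves, which (for suitable $c$) drives the whole of $B_{K(s+1)}$ outside that slab before she turns to level $s{+}1$. This is not the mechanism you describe, and the bookkeeping still has to be done.

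The paper avoids this difficulty altogether by passing to the dual. The hypothesis $\Theta\in\Bad(\bi,n,m)$ is rewritten as a lower bound for $\max_j\|\Theta^*_j(\bm{u})\|$ in terms of the transposed matrix. Minkowski's theorem, applied to a family of parallelepipeds whose shape is adapted to the linear part $\LL$ of $\A$, then produces an explicit sequence $\bm{u}_r\in\Z^n$ with two properties: the projections $\bm{u}_r^{\LL}$ have lacunary Euclidean norms (this is where the geometric work lies---one must control the angle between $\bm{u}_r$ and a fixed coordinate axis $\ell_{n-t}$ so that projection onto $\LL$ does not destroy lacunarity), and the associated dual approximations $\|\Theta^*_j(\bm{u}_r)\|$ are small. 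A Khintchine-type transference identity shows that
\[
N(\Lambda)=\{x\in\A:\inf_r\|\bm{u}_r\cdot x\|>0\}\subset\Tad(\bi,n,m),
\]
and the $1/2$-winning property of $N(\Lambda)$ on $\A$ is then a direct citation of a lemma of Moshchevitin on lacunary sequences of linear forms. Thus the delicate $\alpha=1/2$ game analysis is absorbed into a black box, and no slab-avoidance invariant is needed.
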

Section 4 is devoted to the proof of Theorem \ref{Affine}. We give an outline of the proof in the subsection 4.1.

\section{Proof of Theorem \ref{th curve}}

 Let $\C\subset\R^n$ be an everywhere non-degenerate $C^{(1)}$ curve as in \eqref{def curve}.
Let $\bi=(k_1,\ldots,k_n)$  be an $n$-tuple of real numbers satisfying \eqref{weight}.
Without loss of generality assume that $k_1=\max_{1\leq i\leq n}(k_i)$. Let $\pi:\R^n\rightarrow\R$ be the projection map onto the first coordinate. 

\begin{theorem}\label{th projected}
If $\Theta\in\Bad(\bi,n,m)$, the set $\pi(\Tad(\bi,n,m)\cap \C)$ is 1/4-winning.
\end{theorem}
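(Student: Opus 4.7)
The plan is to reduce the statement to a one-dimensional Schmidt game on $J:=f_1(I)$ and to play the $(1/4,\beta)$-game using the hypothesis $\Theta\in\Bad(\bi,n,m)$ to produce separation among the obstacles. First I would reparametrize the curve by its first coordinate: since $\C$ is everywhere non-degenerate and, without loss of generality, $k_1=\max_i k_i$, the map $f_1:I\to J$ is a $C^{(1)}$ diffeomorphism, and setting $g_i:=f_i\circ f_1^{-1}$ (so $g_1(y)=y$) identifies $\pi(\Tad(\bi,n,m)\cap\C)$ with the set of $y\in J$ for which there exists $c=c(y)>0$ with $\max_{1\le i\le n}|q|^{mk_i}\|\Theta_i(q)-g_i(y)\|\ge c$ for every nonzero $q\in\Z^m$. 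Each $g_i'$ is continuous and nonzero on the compact $J$, yielding constants $m_*,M>0$ with $m_*\le|g_i'|\le M$.

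Next, fixing $\beta\in(0,1)$ and letting $c_0>0$ witness $\Theta\in\Bad(\bi,n,m)$, I would choose a small constant $c>0$ (depending on $\beta$, $M$, and $c_0$) and have Alice attempt to avoid, for every nonzero $q\in\Z^m$, the dangerous set
$$R_q:=\Bigl\{y\in J:\max_{1\le i\le n}|q|^{mk_i}\|\Theta_i(q)-g_i(y)\|<c\Bigr\},$$
which, from the $i=1$ coordinate alone, is contained in a disjoint union of intervals of length at most $2c|q|^{-mk_1}$. The crucial separation lemma to establish reads: if $q\ne q'$ are nonzero with $|q|,|q'|\in[N,2N)$ and some interval $B\subset J$ of radius $\rho$ meets both $R_q$ and $R_{q'}$, then selecting $y\in R_q\cap B$, $y'\in R_{q'}\cap B$ and subtracting the defining inequalities gives, for every $i$,
$$|q-q'|^{mk_i}\,\|\Theta_i(q-q')\|\le(4N)^{mk_i}\bigl(2cN^{-mk_i}+2M\rho\bigr)\le 2\cdot 4^m\bigl(c+M\rho N^{mk_1}\bigr);$$
taking $c$ and $\kappa:=\rho N^{mk_1}$ each at most $c_0/(16\cdot 4^m M)$ will force the right-hand side below $c_0$ in every coordinate, contradicting the $\Bad$ property. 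Consequently, as soon as $\rho N^{mk_1}\le\kappa$, at most one $q$ per dyadic window $[N,2N)$ yields a dangerous interval meeting $B$.

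Finally, I would run Alice's strategy inductively, maintaining that Bob's $n$-th interval $B_n$ (of radius $\rho_n=(\beta/4)^{n-1}\rho_1$) is disjoint from $R_q$ for every $|q|\le N_{n-1}$, where $\rho_{n-1}N_{n-1}^{mk_1}=\kappa$; at step $n$ Alice must then find $A_n\subset B_n$ of radius $\rho_n/4$ avoiding every $R_q$ with $|q|\in(N_{n-1},N_n]$. The separation lemma bounds the number of such new obstacles in $B_n$ by roughly $(1/mk_1)\log_2(4/\beta)$ (one per new dyadic window), and their total length is controlled by a geometric sum dominated by $8c\rho_n/(\kappa\beta(1-2^{-mk_1}))$, which is $\ll\rho_n$ for $c$ small. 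The remaining task of fitting a subinterval of radius $\rho_n/4$ into $B_n$ while avoiding this bounded collection of short obstacles is where I would invoke the Schmidt-game lemma from \cite{NGM} cited in Section~2. The main difficulty, I expect, will be the separation lemma itself together with choosing $c$, $\kappa$, and $N_n$ so that the estimates remain coherent across all steps and uniformly in Bob's moves, plus a small amount of care at the initial steps, where the invariant has to be bootstrapped out of Bob's first ball.
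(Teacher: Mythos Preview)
Your reduction and your separation estimate are essentially the same as the paper's, but the execution has a real gap at the step where Alice must choose $A_n$. With the dyadic partition you get roughly $K\asymp (mk_1)^{-1}\log_2(4/\beta)$ obstacle intervals inside $B_n$, and you then assert that Alice can select a subinterval of length $|B_n|/4$ avoiding all of them because their total length is small. That inference is false in general: removing $K$ tiny intervals from $B_n$ can leave $K+1$ gaps each of length about $|B_n|/(K+1)$, and for $\beta$ small enough $K\ge 4$, so no gap of length $|B_n|/4$ need exist. The lemma you cite from \cite{NGM} does not help here---that lemma is about the winning property of sets $N(\Lambda)$ for lacunary sequences $\Lambda$ (it is the tool used in Section~4 of the present paper), not a device for packing an interval among many short obstacles.

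The fix is already implicit in your own computation, and it is exactly what the paper does: instead of dyadic windows, partition the $q$'s by the coarser scale $R=(4/\beta)^{1/(mk_1)}$, i.e.\ take $N_n=R^n$ so that $N_n/N_{n-1}=R$. Your separation argument then goes through over the whole range $(N_{n-1},N_n]$ at once: with $|q|,|q'|\le N_n$ one has $|q-q'|\le 2N_n$, and the bound becomes $2\cdot 4^{m}\bigl(c\,(N_n/N_{n-1})^{mk_1}+M\rho_n N_n^{mk_1}\bigr)=2\cdot 4^{m}\bigl(4c/\beta+M\kappa\bigr)$, which is below $c_0$ once you choose $c\le c_0\beta/(\text{const})$ and $\kappa$ small. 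Hence at most one obstacle meets $B_n$, its length is at most $2cN_{n-1}^{-mk_1}\ll |B_n|/2$, and a single $\alpha=1/4$ move of Alice avoids it. No appeal to \cite{NGM} is needed; this is precisely the paper's Facts~1 and~2.
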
 

If $\max_{1\leq i \leq n}(k_i)=k_t\neq k_1$, then we consider the projection onto the $t$-th coordinate rather than the first. Since the projection map $\pi$ is bi-Lipschitz on $\C$ and the image of a winning set under a bi-Lipschitz map is again winning (see \cite{D} Proposition 5.3), Theorem \ref{th curve} follows from Theorem \ref{th projected}.
\\

\textbf{Proof.}
Let $\kappa\geq 1$ be a constant such that
\begin{equation}\label{kappa}
|f_i(x)-f_i(x')|\leq\kappa|f_1(x)-f_1(x')| \qquad (i=1,\ldots,n)
\end{equation}
for all $x,x'\in I$. 
 
Since $\Theta\in\Bad(\bi,n,m)$, there exists a constant $0<c<1$ satisfying
\begin{equation}\label{Bad}
\max_{1\leq i\leq n}(|q|^{mk_i}\|\Theta_i(q)\|)>c\qquad\forall q\in \mathbb{Z}^m_{\neq 0}.
\end{equation} 

Alice and Bob play a Schmidt game on the interval $f_1(I)$. A Schmidt game involves two real numbers $\alpha,\beta\in (0,1)$ and starts with Bob choosing a closed interval $B_0\subset f_1(I)$. Next, Alice chooses a closed interval $A_0\subset B_0$ of length $\alpha|B_0|$. Then, Bob chooses at will a closed interval $B_1\subset A_0$ of length $\beta\alpha|B_0|$. Alice and Bob keep playing alternately in this way, generating a nested sequence of closed intervals in $f_1(I)$:
$$
B_0\supset A_0\supset B_1\supset A_1\supset\ldots\supset B_s\supset A_s\supset\ldots
$$
with lengths
$$
|A_s|=\alpha|B_s|\qquad\mbox{and}\qquad|B_s|=\beta |A_{s-1}|=(\alpha\beta)^s|B_0|.
$$
The subset $\pi(\Tad(\bi,n,m)\cap \C)$ is called $\alpha$-winning if Alice can play so that the unique point of intersection
$$
\bigcap_{s=0}^\infty B_s=\bigcap_{s=0}^\infty A_s
$$
lies in $\pi(\Tad(\bi,n,m)\cap \C)$ whatever the value of $\beta$ is.
The goal is to describe a $1/4$-winning strategy for Alice.
 
Suppose Bob has chosen a closed interval $B_0\subset f_1(I)$. We can assume $|B_0|$ to be of length as small as we want. In particular, we assume 
\begin{equation}\label{I}
|B_0|<\dfrac{c}{2\kappa}.
\end{equation}
Let
$$
R=\left(\dfrac{4}{\beta}\right)^{1/mk_1},
$$
and fix $\epsilon>0$ such that
\begin{equation}\label{epsilon}
\epsilon<\dfrac{|B_0|}{4R^{mk_1}}.
\end{equation}
By the definition of the game, for each $s\geq 0$, 
\begin{equation}\label{Bk}
|B_s|=R^{-smk_1}|B_0|.
\end{equation}

For each $(p,q)\in\Z^n\times\Z^m$, let 
$$
\Delta(p,q)=\left\{f_1(x):\, x\in I,\ \max_{1\leq i\leq n}(|q|^{mk_i}|\Theta_i(q)-f_i(x)-p_i|)<\epsilon\right\}.
$$
The interval $\Delta(p,q)$ is the projection of the points in $\C$ lying in a hyperrectangle centred at $(\Theta_1(q)-p_1,\ldots, \Theta_n(q)-p_n)$ of size $\epsilon |q|^{-mk_1}\times\ldots\times\epsilon |q|^{-mk_n}$.
Clearly
$$
f_1(I)\backslash\bigcup_{(p,q)\in\Z^n\times\Z^m}\Delta(p,q)\subset\pi(\Tad(\bi,n,m)\cap\C).
$$

We define a partition of $\Z^n\times\Z^m$ by letting
\begin{equation}\label{Pk}
\Ps=\left\{(p,q)\in\Z^n\times\Z^m : R^{s-1}< |q|\leq R^s\right\}\qquad(s\geq 0).
\end{equation}
We prove that, for every $s\geq 0$, Alice can play such that
\begin{equation}\label{k}
A_s\subset f_1(I)\backslash\bigcup_{(p,q)\in\Ps}\Delta(p,q).
\end{equation}
Thus we will have
\begin{equation}\label{interseccion}
\bigcap_{s=0}^\infty B_s=\bigcap_{s=0}^\infty A_s\subset \pi(\Tad(\bi,n,m)\cap\C),
\end{equation}
and this will prove the theorem.
\\

\textbf{Fact 1.} For each $(p,q)\in\Ps$, we have that
$$
|\Delta(p,q)|\leq\dfrac{2\epsilon}{|q|^{mk_1}} \stackrel{\eqref{Pk}}{<} \dfrac{2\epsilon}{R^{(s-1)mk_1}} \stackrel{\eqref{epsilon}}{<}\dfrac{|B_0|}{2R^{smk_1}}\stackrel{\eqref{Bk}}{=}\frac{1}{2}|B_{s}|.
$$

\textbf{Fact 2.} Suppose $(p,q)$ and $(p',q')$ are two points in $\mathcal{P}_s$ such that $\Delta(p,q)\cap B_s\neq\emptyset$ and $\Delta(p',q')\cap B_s\neq\emptyset$, i.e. suppose that there exist $x,x'\in I$ such that for all $i=1,\ldots,n$,
$$
|\Theta_i(q)-f_i(x)-p_i|<\dfrac{\epsilon}{|q|^{mk_i}},\qquad |\Theta_i(q')-f_i(x')-p'_i|<\dfrac{\epsilon}{|q'|^{mk_i}}
$$
and
$$
|f_1(x)-f_1(x')|\leq|B_s|.
$$
Next we show that $(p,q)=(p',q')$. Indeed, the inequality  \eqref{kappa} implies that, for all $i=1,\ldots,n$,
\begin{align*}
|\Theta_i(q-q')-(p_i-p'_i)|\leq |\Theta_i(q)-f_i(x)-p_i|+&|\Theta_i(q')-f_i(x')-p'_i|\\
&+\kappa|f_1(x)-f_1(x')|.
\end{align*}
Hence
\begin{align*}
|\Theta_i(q-q')-(p_i-p'_i)|
&\stackrel{\eqref{Pk},\eqref{Bk}}{<}\dfrac{2\epsilon}{R^{(s-1)mk_i}}+\dfrac{\kappa|B_0|}{R^{smk_1}}\\
&\stackrel{\eqref{epsilon},\eqref{I}}{<}\dfrac{c}{2R^{smk_i}}+\dfrac{c}{2R^{smk_i}}\\
&\stackrel{\eqref{Pk}}{\leq}\dfrac{c}{|q-q'|^{mk_i}}
\end{align*}
if $q\neq q'$. 
Now the condition \eqref{Bad} implies that the last inequality cannot hold for all $i=1,\ldots,n$ and hence $q=q'$. Then, from the second-to-last inequality it follows that, for all $i=1,\ldots,n$,
$$
|p_i-p'_i|<1,
$$
so $p_i=p_i'$. Thus we conclude that $p=p'$.
\\

A straightforward consequence of the above two facts is that Alice can choose an interval $A_s\subset B_s$ of length $\frac{1}{4}|B_s|$ that avoids $\Delta(p,q)$ for all $(p,q)\in\Ps$. This completes the proof of \eqref{k}. 
\begin{flushright}
$\square$
\end{flushright}

\section{Proof of Theorem \ref{Affine}}

Our exposition is organized as follows.
In the subsection 4.1 we briefly explain the strategy of the proof.
In the subsections~4.2 and ~4.6 we use transference arguments.
In the subsections 4.3, 4.4, 4.5 we explore the geometry of 
auxiliary subspaces. 

Throughout this section we denote by $|\cdot|_e$ the Euclidian norm.

\subsection{Outline of the proof}

Let $1\le d \le n$. Let $\mathcal{A}$ be a $d$-dimensional affine subspace  
%$$
%\A= \left\{ \bm{x} = \bm{x_0} +\bm{e_1} t_1 + \ldots +\bm {e_d} t_d,\, t_i \in \mathbb{R} \right\}
%$$
and
%$$
%\LL = \{ \bm{x} = \bm{e_1} t_1 + \ldots +\bm {e_d} t_d,\, t_i \in \mathbb{R} \} 
%$$
$\LL$ be the corresponding $d$-dimensional linear subspace (so $\LL$ is the translation of $\A$ which contains the origin).

We briefly explain the main construction of our proof.
 We construct a special 
 sequence $\Lambda$ of integer vectors
 $\bm{ u}_r =\bm{ u}_r (\mathcal{L}) = (u_{r,1},\ldots,u_{r,n}) \in \mathbb{Z}^n$
 which is useful for Khintchine's type of inhomogeneous transference argument (see Ch. V from Cassels's book \cite{JC}). 
 %By considering the means of these vectors $\bm{ u}_r$,
  For the sequence $\Lambda$ of integer vectors $\bm{ u}_r$  
we define the set
\begin{equation*}
\aligned
N(\U)& = \{ \bm{x} \in \mathcal{A} :\, \inf_{r\geq 1}\|x_1 u_{r,1} +\ldots +x_n u_{r,n} \| >0\}.
 \endaligned
\end{equation*}
To establish the result of Theorem \ref{Affine} it is enough to prove the following two facts:

{\bf Fact A.}
 $N(\U) \subset \Bad_{\Theta} (\bm{k}, n,m)$,\label{fact1}

 {\bf Fact B.}
$N(\U)$ is winning.\label{fact2}

{\bf Fact A} will follow from the inhomogeneous transference argument; 
we give a detailed exposition in the subsection 4.6. {\bf Fact B} will follow from the construction of $\Lambda$ 
and from Lemma 1 from \cite{NGM}. 
This lemma establishes the winning property of any set of 
the form $N(\U)$ in the special case when $d=n$ and so $\A=\R^n$, 
under the condition that the Euclidian norms of the elements of the sequence $\Lambda$ are lacunary.
Only a minor modification is needed in order to deduce the winning property for the intersection $N(\U) \cap \mathcal{A}$ 
for an arbitrary $d$-dimensional subspace $\A\subset\R^n$, under the condition that the Euclidian norms of the projections 
$\bm{u}^\LL_r$ of $\bm{u}_r$ onto $\LL$ are lacunary, i.e.
\begin{equation}\label{1}
\dfrac{|\bm{u}^\LL_{r+1}|_e} {|\bm{u}^\LL_r|_e} \ge M, \qquad r= 1,2,3, \ldots
\end{equation}
for some $M > 1$.
So, as soon as one checks condition \eqref{1} then Lemma~1 from \cite{NGM} automatically establishes {\bf Fact B} for the set $N(\U)$.

\subsection{Dual setting}

The condition $\Theta\in\Bad(\bi,n,m)$ is the key to ensure that we are able to construct a sequence $\Lambda$ satisfying \eqref{1}. 
That condition has the following dual reformulation (see \cite{BPV} Appendix, Theorem 6, for the proof) in terms of the transposed matrix $\Theta^*$: 
\begin{equation}\label{bad cond}
\Theta\in\Bad(\bi,n,m)\Leftrightarrow\inf_{\bm{q} \in \mathbb{Z}_{\neq 0}^n}\max\limits_{1 \le i \le n} 
(|q_i|^{\frac{1}{m k_i}} ) \max\limits_{1\leq j\leq m}\|\Theta^*_j(\bm{q})\| >\gamma
\end{equation}
for some positive constant $\gamma=\gamma(\Theta)$.
In the sequel we suppose everywhere that
$$
\gamma <1.
$$

One can find generalizations of this dual reformulation in \cite{GE}.

\subsection{Subspaces}

Without loss of generality we suppose $k_1 \ge \ldots \ge k_n.$
Consider the subspaces 
$$
\Gamma_0=\R^n,\qquad \Gamma_i = \{ \bm{x}\in\R^n: x_1=\ldots= x_i = 0 \}\quad (1 \le i \le n).
$$
It is clear that  $\Gamma_i $ is an $(n-i)$-dimensional subspace of $\R^n$
and 
$$\Gamma_0 \supset \Gamma_1 \supset \Gamma_2 \supset \ldots \supset \Gamma_n.$$

Recall that $\mathcal{L} \subset \mathbb{R}^n$ is a $d$-dimensional linear subspace.
We define $t$ to be the minimal positive integer ($d-1 \le t \le n-1$) such that
$$
\mathcal{L} \subset \Gamma_{n-(t+1)} , \qquad
\mathcal{L} \not\subset \Gamma_{n-t}.
 $$
Denote by $\ell_{n-t}$ the coordinate line 
$$
\ell_{n-t}=\{ \bm{x}\in\R^n: x_i = 0\ \forall i\neq n-t\}.
$$

Since $\mathcal{L} \subset \Gamma_{n-(t+1)}$, % then angle $\widehat{l_1, \mathcal{L}} = \alpha < \frac{\pi}{2}$.
the first $n-(t+1)$ coordinates of each point $\bm{x}\in\LL$ vanish, so
the set $N(\U)$ has the form
$$N(\U) = \{ \bm{x} \in \mathcal{A}  : \inf_{r\geq 1}\|u_{r,n-t}\cdot x_{n-t} + \ldots+ u_{r,n} \cdot x_n\| > 0 \}.$$
Hence when we construct the sequence $\Lambda$ satisfying \eqref{1}, we only have to take into account the last $t+1$ coordinates.

 In the rest of the proof we will deal with two spaces: the $(n+m)$-dimensional space 
 $$\mathbb{R}^{n+m} =\{ (\bm{x},\bm{y})= (x_1,\ldots,x_n,y_1,\ldots,y_m)\}
 $$
 and the $(t+1)$-dimensional subspace 
$$\mathbb{R}^{t+1}=\Gamma_{n-(t+1) } =\{ \bm{x}= (x_{n-t},\ldots,x_n)\}.$$
We  identify $\LL$, $\ell_{n-t}$ and $\Gamma_{n-t}$ with subspaces in
$\mathbb{R}^{t+1}$, so when we consider the angles between them, we are just considering the angles  in the $(t+1)$-dimensional space $\mathbb{R}^{t+1}$.
For a vector 
$\bm{u} \in \mathbb{R}^n$ 
we denote by
$\bm{\tilde{u}}$
 its projection onto the subspace $\mathbb{R}^{t+1}=\Gamma_{n-(t+1) }$.

For the clarity of the exposition we  distinguish two cases: {\bf Case 1}  and {\bf Case 2.}

{\bf Case 1.}
If $\mathcal{L} = \Gamma_{n-(t+1)}$, then $t=d-1$ and the projections  $\bm{u}^\LL_r$ are just the projections $\bm{\tilde{u}}_r$ of $\bm{u}_r$ onto $\R^{t+1}$. In this case, it is enough for the property~\eqref{1} to ensure that the sequence $|\bm{\tilde{u}}_r|_e$ is lacunary. 

{\bf Case 2.}
If $\mathcal{L} \neq \Gamma_{n-(t+1)}$, then $t\geq d\ge 1$. In this case,
we need to consider the angle $\omega = \widehat{\mathcal{L}, \ell_{n-t}}$ between the subspace
$\mathcal{L}$ and the one-dimensional $ \ell_{n-t}$. 
Here the angle between subspace $\mathcal{A}$ and one-dimensional subspace $l$  is defined as
$$\widehat{\mathcal{A}, l} = \min_{a\in \mathcal{A}\setminus\{{\bf 0}\}} \widehat{a,l}.$$
It is clear that 
$\widehat{\Gamma_{n-t}, \ell_{n-t}}=\frac{\pi}{2}$, so
as $\mathcal{L} \not\subset \Gamma_{n-t}$ we have
$$
 0\le \omega = \widehat{\mathcal{L}, \ell_{n-t}} <\frac{\pi}{2}.
 $$

Let us consider an arbitrary vector $\bm{u}\in \mathbb{R}^n$.
We should note that the Euclidean norms of the projections  
$\bm{{u}}^\LL
$
and
$
\bm{\tilde{u}}
$
satisfy the equality
$$
|\bm{{u}}^\LL|_e =
|\bm{\tilde{u}}|_e
\cos
\widehat{\mathcal{L}, 
\bm{\tilde{u}}}
.$$
So to get the lacunarity condition
(\ref{1})  with a given $M>1$ for the sequence of Euclidean norms of the projections 
$\bm{u}^\LL_r$
for a certain sequence of vectors $\bm{u}_r$
we should establish two facts:

{\bf Fact C.} the lacunarity of the sequence $|\bm{\tilde{u}}_r|_e$, that is
\begin{equation}\label{1l}
\dfrac{|\bm{\tilde{u}}_{r+1}|_e} {|\bm{\tilde{u}}_r|_e} \ge \tilde{M}, \qquad r= 1,2,3, \ldots,
\end{equation}

{\bf Fact D.} the additional condition %need not only the lacunarity of the sequence $|\bm{\tilde{u}}_r|_e$, but also the following condition
\begin{comment}
\begin{equation}\label{angle}
 \widehat{l_{n-t},\bm{u}_r} \rightarrow 0  \qquad r \rightarrow \infty.
\end{equation}
\end{comment}
%\exists\, \epsilon>0\ \  s.t.\quad \widehat{\ell_{n-t},\bm{u}_r}\leq\epsilon\ \ \forall r\geq 1.
\begin{equation}\label{angle}
 \widehat{\ell_{n-t},\bm{\tilde{u}}_r} =
 \frac{\pi}{2} -  \widehat{\bm{\tilde{u}}_r,\Gamma_{n-t}}
 \le \sigma, \qquad r= 1,2,3, \ldots,
\end{equation}
where 
the values $\tilde{M}$ and $\sigma$ must satisfy the inequality
\begin{equation}\label{inee}
 \tilde{M}\frac{\cos (\omega+\sigma)}{\cos (\omega - \sigma)}=M>1.
\end{equation}

Indeed,
suppose that
$ \widehat{\ell_{n-t},\bm{\tilde{u}}} < \sigma$.
Then
by the definition of the angle and the triangle inequality we have
$$
 \widehat{\bm{\tilde{u}},\LL} =
 \widehat{\bm{\tilde{u}},\bm{\tilde{u}}^\LL}\le
 \widehat{\bm{\tilde{u}},{\ell_{n-t}}^\LL}
\le
\widehat{\bm{\tilde{u}},{\ell_{n-t}}}+
\widehat{\ell_{n-t},{\ell_{n-t}}^\LL}
\le \sigma+ \omega.
$$
Of course here $\bm{\tilde{u}}^\LL=\bm{{u}}^\LL$.
Analogously
$$
 \widehat{\bm{\tilde{u}},\LL} 
+
 \widehat{\bm{\tilde{u}},\ell_{n-t}} 
=
 \widehat{\bm{\tilde{u}},\bm{\tilde{u}}^\LL} 
+
 \widehat{\bm{\tilde{u}},\ell_{n-t}} 
 \ge
  \widehat{\bm{\tilde{u}}^\LL,\ell_{n-t}}
  \ge
   \widehat{\LL,\ell_{n-t}} = \omega.
   $$
So we have
$$
 \omega-\sigma\le  \widehat{\bm{\tilde{u}},\LL} \le \omega+\sigma.
$$
Now from (\ref{1l},\ref{angle}) and (\ref{inee}) we deduce
$$
\dfrac{|\bm{u}^\LL_{r+1}|_e} {|\bm{u}^\LL_r|_e}=
\dfrac{|\bm{\tilde{u}}_{r+1}|_e \cos \widehat{\mathcal{L}, 
\bm{\tilde{u}}}_{r+1} } {|\bm{\tilde{u}}_r|_e\cos \widehat{\mathcal{L}, 
\bm{\tilde{u}}}_r} 
\ge
\dfrac{|\bm{\tilde{u}}_{r+1}|_e \cos (\omega+\sigma) } {|\bm{\tilde{u}}_r|_e\cos (\omega - \sigma)} 
\ge M>1,
$$
and this gives (\ref{1}).

Now we put $M=2$. Then $\tilde{M} =  \frac{2\cos (\omega-\sigma)}{\cos (\omega + \sigma)}$.
So to get (\ref{1l},\ref{inee}) it is enough to satisfy the condition
\begin{equation}\label{lasa}
 \dfrac{|\bm{\tilde{u}}_{r+1}|_e} {|\bm{\tilde{u}}_r|_e}\ge\frac{2\cos (\omega-\sigma)}{\cos (\omega + \sigma)}, \qquad r= 1,2,3, \ldots.
\end{equation}

In the next subsection, we construct the sequence $\Lambda$ by constructing a sequence of parallelepipeds $\Pi_T$ in $\R^{n+m}$ 
and by choosing a certain integer vector $\bm{u}_r$ in each of them. To ensure \eqref{angle}, we construct the corresponding  
parallelepipeds  $\overline{\Pi}_T$ in $\R^{t+1}$ very long in the $n-t$ direction and short
in the other directions corresponding to $n-t+j,\quad 1 \le j \le t$.
(The precise definitions will be given in the next subsection.)
Hence the vectors $\bm{u}_r$ we choose in each parallelepiped are close to the line $\ell_{n-t}$ and their projections onto $\LL$ are close to the line $\ell^\LL_{n-t}$. 
%In order to show formally \eqref{angle} in the subsection 4.5,we 
%Consider the angle
%$\omega = \widehat{\mathcal{L}, \Gamma_{n-t}}$. We mean here that the angle between subspaces is $$\widehat{\mathcal{A}, \mathcal{B}} = \min_{a\in \mathcal{A}, b\in \mathcal{B}} \widehat{a,b}.$$

\subsection{Parallelepipeds}

 First of all we put
 $$
 \sigma = \min \left(\frac{\omega}{2},\frac{\pi}{4}-\frac{\omega}{2}\right).
 $$
 Then
 $$
  0<\sigma <\frac{\pi}{4}.
  $$
  For $t \ge 1$ we define 
\begin{equation}\label{lamed}
 \lambda = \frac{\sqrt{t}}{\tan \sigma} >1.
\end{equation}

We proceed now to the construction of the sequence $\Lambda$.
Given  $T \ge~1$ and a collection of strictly positive real numbers $\beta_1,\ldots, \beta_{n+1}$, we consider the $(n+m)$-dimensional parallelepiped
\begin{align*}
\Pi_T (\beta_1 ,\ldots, \beta_{n+1})= \{ (\bm{x}, \bm{y}) \in\R^n\times &\R^m :\, |x_i| \le \beta_i T^{mk_i} \ (1 \le i \le n),\\
  &\max\limits_{1 \le j \le m} |\Theta^*_j(\bm{x}) - y_j| 
\le \beta_{n+1} T^{-1}  \}
\end{align*}
and its projection onto $\mathbb{R}^{t+1}$
$$
\overline{\Pi}_{T} (\beta_{n-t},\ldots,\beta_n)  = \{ (x_{n-t},\ldots, x_n ) \in \mathbb{R}^{t+1} : |x_i| \le \beta_i T^{mk_i} \ (n-t
  \le i \le n)\}.
$$
By \eqref{bad cond} we have 
$$
\Pi_T(1, \ldots, 1, \gamma)\cap \mathbb{Z}^{n+m} = \{0\}.
$$
As $\lambda>1$ we have
\begin{equation}\label{lamed1}
\Pi_T(\underbrace{1, \ldots, 1}_{n-t},\underbrace{\lambda^{-1},...,\lambda^{-1}}_{t}, \gamma)\cap \mathbb{Z}^{n+m} = \{0\}.
\end{equation}

%$$\Pi_T(\underbrace{1,\ldots,1}_{n-t},\sigma^{-1}, \ldots, \sigma^{-1}, \gamma)\cap \mathbb{Z}^{n+m} = \{0\}.$$

%where $\sigma$ is a large enough constant.

However, the parallelepiped
$\Pi_T (\underbrace{1,\ldots,1}_{n-t-1},\gamma^{-m}\lambda^t,\underbrace{\lambda^{-1},...,\lambda^{-1}}_{t}, \gamma) $
 is convex, symmetric, with volume
$$
 \gamma^{-m} \lambda^t\cdot \left(\prod\limits_{i=1}^n 2T^{mk_i} \right) \cdot 2^m \gamma^m \lambda^{-t} T^{-m} = 2^{n+m}
$$
and then, by Minkowski's Convex Body Theorem, we have
$$
\Pi_T (\underbrace{1,\ldots,1}_{n-t-1},\gamma^{-m}\lambda^t,\underbrace{\lambda^{-1},...,\lambda^{-1}}_{t}, \gamma)\cap \mathbb{Z}^{n+m} \neq \{0\}.
$$
Therefore, for each $T \ge 1$ there exists at least one integer vector $\bm{w} = (\bm{u}, \bm{v}) \in \mathbb{Z}^{n+m}$ such that
$$
\bm{w} \in \Pi_T (\underbrace{1,\ldots,1}_{n-t-1},\gamma^{-m}\lambda^t,\underbrace{\lambda^{-1},...,\lambda^{-1}}_{t}, \gamma)  \setminus
\Pi_T(\underbrace{1, \ldots, 1}_{n-t},\underbrace{\lambda^{-1},...,\lambda^{-1}}_{t}, \gamma).
$$
%We choose among these integer vectors the one 
Among these integer vectors we choose one with the smallest coordinate $|u_{n-t}| \ge 1$. If this vector is not unique we choose that 
for which $\max\limits_{1 \le j \le m} |\Theta^*_j(\bm{u}) - v_j|$ attains its minimal value.
We denote this vector by
$$
\bm{w}(T) = (\bm{u}(T),\bm{v}(T))  = (u_1 (T), \ldots, u_n (T), v_1 (T), \ldots, v_m (T))
$$
and define
$$
\psi (T)  =\max\limits_{1 \le j \le m} \|\Theta^*_j(\bm{u}(T))\|= \max\limits_{1 \le j \le m} |\Theta^*_j(\bm{u}(T)) - v_j(T)|
.
$$

Since $\bm{w}(T) \in \Pi_T (\underbrace{1,\ldots,1}_{n-t-1},\gamma^{-m}\lambda^t, \lambda^{-1}, \ldots, 1\lambda^{-1}, \gamma)$, one has   
\begin{equation}\label{2.1''}
| u_{i}(T)| \le T^{mk_{i}}, \quad 1 \le i \le n-t-1,
\end{equation}
\begin{equation}\label{2uy}
|u_{n-t} (T)| \le \lambda^t\,\gamma^{-m} T^{mk_{n-t}},
\end{equation}
\begin{equation}\label{2uz}
 | u_{i}(T)| \le \lambda^{-1}T^{mk_{i}}, \quad n-t+1 \le i \le n.
\end{equation}
Also
we have
\begin{equation}\label{2.2''}
\psi (T) \le \gamma T^{-1}.
\end{equation}
Since $\bm{w} \notin \Pi_T(\underbrace{1, \ldots, 1}_{n-t},\underbrace{\lambda^{-1},...,\lambda^{-1}}_{t}, \gamma)$, it is clear that
\begin{equation}\label{2uu}
|u_{n-t} (T)|>T^{mk_{n-t}}
\end{equation}
and so
\begin{equation}\label{P1}
\max\limits_{1 \le i \le n} (|u_i(T)|^{1/(mk_i)}) = |u_{n-t}(T)|^{1/(mk_{n-t})}.
\end{equation}
By \eqref{bad cond} and (\ref{P1}) one has
\begin{equation}\label{P2}
\psi (T) \ge \gamma (\max\limits_{1 \le i \le n} (|u_i(T)|^{1/(mk_i)}))^{-1} \ge \lambda^{-t/mk_{n-t}}\,\gamma^{1+1/k_{n-t}} T^{-1}.
\end{equation}
Here we should note that in the {\bf Case 2} we have $t\ge 1$ and  for the projection $\bm{\tilde{u}} (T)$ of the vector $\bm{u} (T)$ onto $\mathbb{R}^{t+1}$
from the definition (\ref{lamed}), conditions (\ref{2uz},\ref{2uu}) and the condition $ k_{j} \ge k_{j+1}, \, n-t\le j \le n $ we have
\begin{equation}\label{ou}
\bm{\tilde{u}} (T)
\in 
 \overline{\Pi}_{T} (\gamma^{-m}\lambda^t ,\lambda^{-1} \ldots, \lambda^{-1}) \backslash 
\overline{\Pi}_{T_r} (1,\lambda^{-1}, \ldots, \lambda^{-1})
\end{equation}
and
$$
\widehat{\bm{\tilde{u}} (T), \ell_{n-t}}
\le
\arctan \left(
\frac{\sqrt{t} \displaystyle{\max_{n-t+1\le i \le n}}  \lambda^{-1}T^{mk_{i}}}{T^{mk_{n-t}}}
\right)
\le\arctan \left(\frac{\sqrt{t}}{\lambda}\right) = \sigma.
$$
So all the constructed  vectors $\bm{u} (T) $
satisfy the condition (\ref{angle}).
Now it turns out to be  sufficient to satisfy the lacunarity
condition 
(\ref{lasa}), for both {\bf Cases 1} and {\bf 2}.

\subsection{Lacunarity}

Let 
\begin{equation}\label{P3}
T_r=R^r,\,\,\,\text{where}\,\,\,
R = 
\left(
2\sqrt{t+1}\,\lambda^t\gamma^{-m} \, \frac{\cos (\omega-\gamma)}{\cos (\omega+\gamma)}
\right)^{\frac{1}{mk_{n-t}}}.
\end{equation}
It is clear that
\begin{equation}\label{P20}
 R 
 >
 \gamma^{-1/k_{n-t}}\lambda^{t/mk_{n-1}}.
\end{equation}

We put $T = T_r $ and define the sequence of vectors $\bm{u}_r\in\mathbb{R}^{n}$
from the equality
$$
\bm{w}_{r}  = (\bm{u}_{r} , \bm{v}_{r} ) = \bm{w}(T_r),
$$
where $\bm{w} (\cdot )$ is defined in the previous subsection. 

\begin{lemma}
The sequence $|\bm{\tilde{u}}_r|_e$   satisfies the lacunarity condition (\ref{lasa}).
\end{lemma}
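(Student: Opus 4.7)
The plan is to sandwich $|\bm{\tilde{u}}_r|_e$ between explicit lower and upper bounds coming directly from the parallelepiped membership conditions \eqref{2uy}, \eqref{2uz} and \eqref{2uu}, and then observe that the value of $R$ chosen in \eqref{P3} was tailored precisely so that the ratio of these bounds equals the constant required by \eqref{lasa}.

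For the lower bound, I would apply \eqref{2uu} at the index $r+1$:
$$
|\bm{\tilde{u}}_{r+1}|_e \;\ge\; |u_{n-t}(T_{r+1})| \;>\; T_{r+1}^{m k_{n-t}} \;=\; R^{(r+1)m k_{n-t}}.
$$
For the upper bound, the projection $\bm{\tilde{u}}_r$ lies in $\overline{\Pi}_{T_r}(\gamma^{-m}\lambda^t,\lambda^{-1},\ldots,\lambda^{-1})$ by \eqref{ou}, so inequalities \eqref{2uy} and \eqref{2uz} together with the weight ordering $k_{n-t}\ge k_{n-t+1}\ge\ldots\ge k_n$ and $T_r\ge 1$ show that each of the $t+1$ coordinates of $\bm{\tilde{u}}_r$ has absolute value at most $\lambda^t\gamma^{-m}T_r^{m k_{n-t}}$. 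Hence
$$
|\bm{\tilde{u}}_r|_e \;\le\; \sqrt{t+1}\,\lambda^t\gamma^{-m}\,R^{r\,m k_{n-t}}.
$$

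Dividing gives $\tfrac{|\bm{\tilde{u}}_{r+1}|_e}{|\bm{\tilde{u}}_r|_e} \ge \tfrac{R^{m k_{n-t}}}{\sqrt{t+1}\,\lambda^t\gamma^{-m}}$, and substituting the defining formula \eqref{P3} for $R$ cancels the factor $\sqrt{t+1}\,\lambda^t\gamma^{-m}$ and leaves exactly $\tfrac{2\cos(\omega-\sigma)}{\cos(\omega+\sigma)}$, which is precisely \eqref{lasa}; in Case~1 one has $\omega=0$ and the same identity produces the cleaner constant $2$, so a single computation covers both cases. The only subtleties are routine bookkeeping: that $T_r\ge 1$, which follows from $R^{m k_{n-t}}\ge 2$ by inspection of \eqref{P3} (every factor in the base is $\ge 1$, using $\lambda>1$, $0<\gamma<1$, and $0\le\omega-\sigma<\omega+\sigma<\pi/2$); and that $\lambda^t\gamma^{-m}\ge\lambda^{-1}$, immediate from $\lambda>1$ and $\gamma<1$. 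Once these are in hand, the bound $\lambda^t\gamma^{-m}T_r^{m k_{n-t}}$ truly dominates every coordinate bound, and the argument reduces to a clean substitution with no real obstacle.
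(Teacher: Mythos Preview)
Your proof is correct and follows essentially the same approach as the paper: bound $|\bm{\tilde{u}}_r|_e$ from below via \eqref{2uu} and from above via \eqref{2uy}--\eqref{2uz} (using the weight ordering and $T_r\ge 1$ to make $\lambda^t\gamma^{-m}T_r^{mk_{n-t}}$ a common upper bound for all $t+1$ coordinates), then divide and substitute \eqref{P3}. Your extra bookkeeping remarks (that $T_r\ge 1$, that $\lambda^t\gamma^{-m}\ge\lambda^{-1}$, and the Case~1 observation) only make the argument more explicit than the paper's version.
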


\begin{proof}[Proof]
From (\ref{2uy},\ref{2uz}) we see that
$$
|\bm{\tilde{u}}_r|_e\le \sqrt{t+1}\, \lambda^t \gamma^{-m} T_r^{mk_{n-t}}.
$$
From (\ref{2uu}) we have the lower bound
$$
|\bm{\tilde{u}}_r|_e\ge  T_r^{mk_{n-t}}.
$$
Now
$$
\frac{|\bm{\tilde{u}}_{r+1}|_e}{|\bm{\tilde{u}}_r|_e}
\ge
\frac{T_{r+1}^{mk_{n-t}}}{\sqrt{t+1}\, \lambda^t \gamma^{-m} T_r^{mk_{n-t}}}
\ge \frac{2\cos (\omega-\gamma)}{\cos (\omega+\gamma)}
$$
by (\ref{P3}), and everything is proved.
\end{proof}

\subsection{Application of transference identity}

Now we establish {\bf Fact A}. Let $x \in N(\U)$. So there exists a constant $c(x)>0$ such that
$$
\|x_1 u_{r,1} +\ldots +x_n u_{r,n} \|>c(x) \qquad (r\geq 1).
$$
We define
$$
\psi_r = \psi (T_r)= \max\limits_{1 \le j \le m} \| \Theta^*_j (\textbf{u}_r)\|.
$$
For any $\bm{q} \in \mathbb{Z}^m_{\neq 0}$, consider the equality
$$
\textbf{u}_r \cdot \bm{x} = \sum\limits_{j =1}^m q_j \Theta^*_j (\textbf{u}_r) -\sum\limits_{i =1}^n (\Theta_i (\textbf{q}) - x_i) u_{r,i}(T_r).
$$
It follows from the triangle inequality that 
\begin{align*}
c(x) &\le \|\textbf{u}_r \cdot \bm{x} \|\\
 &\le m \max\limits_{1 \le j \le m} (\| \Theta^*_j (\textbf{u}_r)\| \cdot |q_j|) + n \max\limits_{1 \le i \le n} (\|\Theta_i (\textbf{q}) - x_i\| \cdot |u_{r,i}|)\\ 
&\le m \psi_r  |\bm{q}| + n \max\limits_{1 \le i \le n} (\|\Theta_i (\textbf{q}) - x_i\| \cdot |u_{r,i}|).
\end{align*}
Here we use the well known inequality $\|a \bm{z}\| \le |a| \|\bm{z}\|$, which holds for all $a \in \mathbb{R}$ and all $\bm{z} \in \mathbb{R}^m$.

It is clear that $\psi_r \rightarrow 0$ as $r \rightarrow \infty$.
We show
that $\psi_r$ is strictly decreasing.
Indeed,
$$
\begin{array}{ll}
\psi_r 
\stackrel{\eqref{P2}}{\ge}
\lambda^{-t/mk_{n-t}}\,\gamma^{1+1/k_{n-t}} T_r^{-1}
&\stackrel{\eqref{P3}}{=}
\gamma T_{r+1}^{-1} \cdot
\left(
R
\lambda^{-t/mk_{n-t}}\,\gamma^{1/k_{n-t}} \right)\\
&\stackrel{\eqref{P20}}{>}
\gamma T_{r+1}^{-1}\\
&\stackrel{\eqref{2.2''}}{\ge}
\psi_{r+1}.
\end{array}
$$
Moreower, from (\ref{2.2''},\ref{P2},\ref{P20}) it follows that
\begin{equation}\label{jj}
 \frac{\psi_{r-1}}{\psi_r}
 \le
 \lambda^{1/mk_{n-t}}
 \gamma^{-1/k_{n-t}} \frac{T_{r}}{T_{r-1}} =
 \lambda^{1/mk_{n-t}}
 \gamma^{-1/k_{n-t}} R \le R^2
 .
\end{equation}
Now we can
choose $r$ in such a way that
\begin{equation}\label{2.9}
\psi_{r-1}  \ge \dfrac{c(x)}{2 m |\bm{q}|}
> \psi_r .
 \end{equation}
 Therefore
$$
c(x) \le n \max\limits_{1 \le i \le n} (\|\Theta_i (\textbf{q}) - x_i\| \cdot |u_{r,i}|) + m |\bm{q}|  \dfrac{c(x)}{2 m |\bm{q}|},
$$
and so
\begin{equation}\label{jjjj}
\dfrac{c(x)}{2 n} \le  \max\limits_{1 \le i \le n} (\|\Theta_i (\textbf{q}) - x_i\| \cdot |u_{r,i}|).
\end{equation}
From (\ref{2.1''},\ref{2uz})
we deduce
$$
|u_{r,i}| \le T_r^{mk_i},\,\,\,\, 1 \le i \le n, i \neq n-t
,
$$
and from  (\ref{2uy})
we have
$$
|u_{r,n-t}| \le \lambda^t\gamma^{-m} T_r^{mk_{n-t}} 
.$$
So in any case
 $$
|u_{r,i}| \le \lambda^t\gamma^{-m} T_r^{mk_i} 
\stackrel{\eqref{2.2''}}{\le}
\lambda^t\gamma^{m(k_i-1)} \psi_r^{-mk_i}
=
\lambda^t\gamma^{m(k_i-1)} \psi_{r-1}^{-mk_i}
\cdot\left(\frac{ \psi_{r-1}}{ \psi_r}\right)^{mk_i}
$$
and by  (\ref{jj}) and the left inequality from 
(\ref{2.9}),
\begin{equation}\label{j*}
|u_{r,i}| \le 
\lambda^t\gamma^{m(k_i-1)}
\left(
\frac{2mR^2}{c(x)}\right)^{mk_i}  |\bm{q}|^{mk_i}
.
\end{equation}
 Now from (\ref{jjjj},\ref{j*}) we get
$$
\max\limits_{1 \le i \le n} (\|\Theta_i (\textbf{q}) - x_i\| \cdot |\bm{q}|^{mk_i}) \ge \kappa
$$
with  some constant $\kappa > 0$, independent of $\bm{q}$.
Since the choice of the vector $\bm{q}$ was arbitrary, we have shown that
 $x\in \Bad_\Theta(\bm{k}, n,m)$. Hence we prove $N(\U) \subset \Bad_\Theta(\bm{k}, n,m)$.


\begin{thebibliography}{99}


\bibitem{An} 
J. An, \emph{Two-dimensional badly approximable vectors and Schmidt's game}, arXiv:1204.3610.

\bibitem{ABV}
J. An, V. Beresnevich, S. Velani, \emph{Badly approximable points on planar curves and winning}, arXiv:1409.0064 (2014). 

\bibitem{BV}
D. Badziahin, S. Velani, \emph{Badly approximable points on planar curves and a problem of Davenport}, Mathematische Annalen. 359 (3) (2014), 969-1023.

\bibitem{BPV}
D. Badziahin, A. Pollington, S. Velani, \emph{On a problem in simultaneous Diophantine approximation: Schmidt's conjecture}, Annals of Mathematics. 174 (2011), 1837-1883.

\bibitem{BM} P. Bengoechea,  N. Moshchevitin, \emph{On weighted twisted badly approximable numbers}, submitted, arXiv:1507.07119.

\bibitem{Ber}
V. Beresnevich, \emph{Badly approximable points on manifolds}, Invent. math. (2015) DOI: 10.1007/s00222-015-0586-8. 

\bibitem{BHKV}
Y. Bugeaud, S. Harrap, S. Kristensen, and S. Velani, \emph {On shrinking targets for $\Z^m$ actions on tori} Mathematika 56 (2010) 193-202


%\bibitem{D}
%Davenport, H.: \emph{A note on Diophantine approximation, II}, Mathematika 11 (1964), 50-58.

\bibitem{ET} 
M. Einsiedler, J. Tseng, \emph{Badly approximable systems of affine forms, fractals, and Schmidt games}, J. Reine Angew. Math. 660 (2011), 83-97.

%\bibitem{H} S. Harrap, \emph{Twisted inhomogeneous Diophantine approximation and badly approximable sets}, Acta Arithmetica, 151 (2012), 55-82.

\bibitem{GE}
O. N. German and K. G. Evdokimov, \emph{A strengthening of Mahler's transference theorem},  Izvestiya: Mathematics 79:1 (2015) 60-73.

\bibitem{HG}
H.J. Godwin, \emph{On the theorem of Khintchine}, Proc. London Math. Soc. V.3, 1, 1953, 211-221

\bibitem{HM} S. Harrap, N. Moshchevitin, \emph{A note on weighted badly approximable linear forms}, to appear in Glasgow Mathematical Journal.

\bibitem{JC}
J.W.S. Cassels,\emph{ An introduction to Diophantine approximation}, Cambridge
Tracts in Math., vol. 45, Cambridge Univ. Press, Cambridge,
1957.

\bibitem{D}
S. Dani, \emph{On badly approximable numbers, Schmidt games and bounded orbits of flows}, Number theory and dynamical systems (York, 1987), London Math. Soc. Lec-
ture Note Ser., vol. 134, CUP, 1989, 69-86.

\bibitem{J}
V. Jarn\'{i}k,\emph{ O linea\'{r}n\'{i}ch nehomogenn\'{i}ch
  diofantick\'{y}ch aproximac\'{i}ch (on linear inhomogeneous Diophantine approximations)},
   Rozpravy II. T\v{r}\'{i}dy \v{C}esk\'{e} 
   Akad. 51 (1941), no. 29, 21. MR 0021015  
 
\bibitem{J2} 
 V. Jarn\'{i}k,\emph{ Sur les approximations diophantiques lin\'{e}aires non homog\'{e}nes}, Acad. Tch\'{e}que Sci. Bull. Int. Cl. Sci. Math. Nat. 47 (1946), 145-160 (1950). 
 
\bibitem{K} D.H. Kim, \emph{The shrinking target property of irrational rotations}, Nonlinearity 20 (2007), 7, 1637-1643.

\bibitem{KH}
A. Khintchine, \emph{\"Uber eine Klasse linearer diophantischer Approximationen}, Rend. Circ. Mat. Palermo 50 (1926), 170-195.

\bibitem{KH2}
A. Khintchine, 
\emph{\"Uber die angen\"{a}herte Aufl\"{o}sung linearer Gleichungen in ganzen Zahlen},
 Acta Arith. 2 (1937), 161-172.

 \bibitem{KH3}
 A. Y. Khintchine,\emph{ Regular systems of linear equations and a general problem of Chebyshev}, Izvestiya Akad. Nauk SSSR. Ser. Mat. 12 (1948), 249-258 (Russian). 
 
 \bibitem{KL}
D. Kleinbock,\emph{ Badly approximable systems of affine forms}, J. Number Theory 79 (1999), no. 1, 83-102.



%\bibitem{P} Pyartli, A.: \emph{Diophantine approximation on submanifolds of euclidean space}, Funkts. Anal. Prilosz. 3,
%59-62 (1969). (in Russian)

\bibitem{NGM}
N. Moshchevitin, \emph{A note on badly approximable affine forms and winning sets}, Mosc. Math. J. 11 (2011), no. 1, 129-137.

\bibitem{PV} A. Pollington, S. Velani, \emph{On simultaneously badly approximable numbers}, J. London Math. Soc. (2) 66 (2002), 29-40.

\bibitem{FSU}
L. Fishman, D. S. Simmons, M. Urba\'{n}ski, 
\emph{Diophantine approximation and the geometry of limit sets in Gromov hyperbolic metric spaces} arXiv:1301.5630v11.

\bibitem{Sch1}
W. M. Schmidt, \emph{Badly approximable systems of linear forms}, J. Number Theory 1 (1969), 139-154. MR 0248090. 

\bibitem{Sch2}
 W. M. Schmidt, \emph{Diophantine approximation}, Lecture Notes in Math., vol. 785, Springer-Verlag, Berlin 1980.
 
\bibitem{Sch3}
  W. M. Schmidt, \emph{On badly approximable numbers and certain games}, Trans. Amer. Math. Soc. 123 (1966), 178-199. MR 0195595. 

\bibitem{Spr}
V. G. Sprindzuk, \emph{Achievements and problems in Diophantine approximation theory}, Russian
Math. Surveys 35 (1980), 1-80.

\bibitem{T} J. Tseng, \emph{Badly approximable affine forms and Schmidt games}, J. Number Theory 129 (2009), 3020-3025.

\end{thebibliography}
\end{document}